\documentclass[a4paper,10pt]{article}

\usepackage{geometry}

\usepackage{nomencl}
\usepackage{amsmath,amssymb,amstext} 
\usepackage{amsthm, mathrsfs}
\usepackage{authblk}
\usepackage{natbib}
\usepackage{dsfont} 

\newcommand{\ud}{\, \mathrm{d}}

\newcommand{\indicatorfn}{\mathds{1}}



\newcommand{\expectation}{\textrm{E}}
\newcommand{\filtration}{\mathcal{F}}

\newcommand{\Omegaset}{\Omega}

\newcommand{\timehorizon}{T}
\newcommand{\timeset}{[0,\timehorizon]}

\newcommand{\Pmeasure}{\mathbb{P}}
\newcommand{\probspace}{(\Omegaset, \filtration, \Pmeasure)}

\newcommand{\measurablespace}{\Omegaset,\filtration}

\newcommand{\BMdim}{N}
\newcommand{\BM}{W}
\newcommand{\vBM}{\BM}

\newcommand{\markovspace}{I}
\newcommand{\Markovgenerator}{G}
\newcommand{\markovgenerator}{g}
\newcommand{\markovchain}{\alpha}
\newcommand{\markovdim}{D}
\newcommand{\markovinitialstate}{i_{0}}

\newcommand{\markovmartingale}{M}

\newcommand{\markovsquareqvprocess}{N}
\newcommand{\markovcompensator}{\lambda}

\newcommand{\markovangleqvprocessij}{\langle \markovmartingale_{ij} \rangle}

\newcommand{\naturalnumbers}{\mathbb{N}}

\newcommand{\realnumbers}{\mathbb{R}}
\newcommand{\realnumbersN}{\mathbb{R}^{N}}
\newcommand{\realnumbersP}{\mathbb{R}^{P}}
\newcommand{\realnumbersNN}{\mathbb{R}^{N \times N}}
\newcommand{\realnumbersDD}{\mathbb{R}^{D \times D}}

\newcommand{\admissibleportfolios}{\mathcal{A}}

\newcommand{\meanret}{b}

\newcommand{\varianceret}{\sigma}
\newcommand{\varianceretT}{\sigma^{\top}}

\newcommand{\ITOX}{X}
\newcommand{\vITOX}{\ITOX}

\newcommand{\controlset}{\mathcal{U}}
\newcommand{\control}{u}
\newcommand{\vcontrol}{\control}

\newcommand{\vcontrolhat}{\hat{\vcontrol}}
\newcommand{\controlopt}{u^{\star}}
\newcommand{\vcontrolopt}{\controlopt}

\newcommand{\controladmiss}{\mathcal{A}}

\newcommand{\state}{X}
\newcommand{\statehat}{\hat{\state}}
\newcommand{\statezero}{x}
\newcommand{\stateopt}{\state^{\star}}

\newcommand{\hamiltonian}{\mathcal{H}}
\newcommand{\hamiltonianhat}{\hat{\hamiltonian}}
\newcommand{\hamp}{p}
\newcommand{\hamq}{q}
\newcommand{\hamr}{\eta}
\newcommand{\vhamp}{\hamp}
\newcommand{\vhamq}{\hamq}
\newcommand{\vhamr}{\hamr}
\newcommand{\vhamqT}{\vhamq^{\top}}
\newcommand{\hamhatp}{\hat{\hamp}}
\newcommand{\hamhatq}{\hat{\hamq}}
\newcommand{\hamhatr}{\hat{\hamr}}
\newcommand{\vhamhatp}{\hat{\hamp}}
\newcommand{\vhamhatq}{\hat{\hamq}}
\newcommand{\vhamhatr}{\hat{\hamr}}
\newcommand{\vhamhatqT}{\hat{\hamq}^{\top}}

\newcommand{\valuefn}{V}
\newcommand{\constc}{d}
\newcommand{\initialstate}{x_{0}}

\newcommand{\Jfn}{J}
\newcommand{\Jfncontrol}{J_{\vcontrol}}
\newcommand{\ffn}{f}
\newcommand{\gfn}{h}
\newcommand{\HJBF}{F}
\newcommand{\kwrtx}{k}

\newcommand{\fnYk}{Y_{\kwrtx}}

\newcommand{\fnpsi}{R}
\newcommand{\fnP}{S}
\newcommand{\integrandpsi}{\nu^{\fnpsi}}
\newcommand{\integrandP}{\nu^{\fnP}}
\newcommand{\integrandpsihat}{\hat{\nu}^{\fnpsi}}
\newcommand{\integrandPhat}{\hat{\nu}^{\fnP}}

\newcommand{\adjppsi}{\psi}
\newcommand{\adjpphi}{\phi}

\newcommand{\QLM}{quadratic loss minimization }


\newcommand{\bankaccount}{S_{0}}
\newcommand{\stockprice}{S_{1}}

\theoremstyle{plain}
\newtheorem{thm}{Theorem}[section]

\theoremstyle{definition}

\theoremstyle{remark}

\newtheorem{rmk}[thm]{Remark}

\numberwithin{equation}{section}

%

\title{Sufficient stochastic maximum principle in a regime-switching diffusion model}


\author{Catherine Donnelly\thanks{({\tt C.A.Donnelly@hw.ac.uk}).  Mailing address: Department of Actuarial Mathematics and Statistics, Heriot-Watt University, Edinburgh, U.K.  Phone: +44 131 451 3251.  Fax: +44 131 451 3249.} \, \emph{Heriot-Watt University, Edinburgh, U.K.}}


\begin{document}

\maketitle

\begin{abstract}
We prove a sufficient stochastic maximum principle for the optimal control of a regime-switching diffusion model.  We show the connection to dynamic programming and we apply the result to a quadratic loss minimization problem, which can be used to solve a mean-variance portfolio selection problem.
\end{abstract}

{ \bf Keywords:} Sufficient maximum principle, regime-switching, optimal control, mean-variance portfolio selection.

{ \bf Subject classification:} Primary: 49K21

\pagestyle{myheadings}
\thispagestyle{plain}
\markboth{C. DONNELLY}{STOCHASTIC MAXIMUM PRINCIPLE}

\section{Introduction}

The aim of this paper is to prove a sufficient stochastic maximum principle for optimal control within a regime-switching diffusion model.  This extends the result of \citet{framstadetal04.article}, which is in a jump-diffusion setting.  To prove this, we follow the method in \citet{framstadetal04.article}.  As in their paper, we show the connection to dynamic programming and show how to apply the result to a \QLM problem.

An early maximum principle for a diffusion model is in \citet{bismut73.article}, where a necessary maximum principle is derived in a model which is somewhat structurally similar to our own and, as we also find in our set-up, this results in jumps in the adjoint variables of the Hamiltonian.

For a hidden Markovian regime-switching diffusion model, \citet{elliottsiubadescu10.article} apply, though do not state explicitly, a sufficient maximum principle to a mean-variance portfolio selection problem.  However, their model is not the same as the one we consider and hence they do not obtain jumps in the adjoint variables.

In Section \ref{SECmodel} we detail the regime-switching diffusion model and in Section \ref{SECproblem} we set out the control problem.  The sufficient stochastic maximum principle is given in Section \ref{SECsmp}.  This is followed by demonstrating in Section \ref{SECdynamic} the connection with dynamic programming.  Finally, in Section \ref{SECexample} we illustrate the use of the sufficient stochastic maximum principle by solving a \QLM problem.

\section{The regime-switching diffusion model} \label{SECmodel}
Let $\timehorizon \in (0, \infty)$ be a fixed, deterministic time.  We assume that we are given an $\BMdim$-dimensional Brownian motion $\BM = (\BM_{1}, \ldots, \BM_{\BMdim})$ and a continuous-time, finite state space Markov chain $\markovchain$ defined on the same probability space $(\measurablespace, \Pmeasure)$.

The filtration is generated jointly by the Brownian motion $\vBM$ and the Markov chain $\markovchain$,
\begin{equation} \label{MKTfiltration}
 \filtration_{t} := \sigma \{ (\markovchain (s), \vBM (s)), s \in [0,t] \} \vee \mathcal{N} ( \Pmeasure ), \quad \forall t \in \timeset, 
\end{equation}
where $\mathcal{N} ( \Pmeasure )$ denotes the collection of all $\Pmeasure$-null events in the probability space $\probspace$.

We assume that the Markov chain takes values in a finite state space $\markovspace = \{ 1, \ldots, \markovdim \}$ and it starts in a fixed state $\markovinitialstate \in \markovspace$, so that $\markovchain(0) = \markovinitialstate$, a.s.   The Markov chain $\markovchain$ has a generator $\Markovgenerator$ which is a $\markovdim \times \markovdim$ matrix $\Markovgenerator = (\markovgenerator_{ij})_{i,j=1}^{\markovdim}$.  Denote by $\indicatorfn$ the zero-one indicator function.  Associated with each pair of distinct states $(i,j)$ in the state space of the Markov chain is a point process, or counting process,
\begin{equation} \label{EQNmarkovpointprocess}
\markovsquareqvprocess_{ij} (t) :=
\sum_{0 < s \leq t} \indicatorfn_{ \{ \markovchain (s_{-}) = i \} } \, \indicatorfn_{ \{ \markovchain (s) = j \} }, \quad \forall t \in \timeset.
\end{equation}
The process $\markovsquareqvprocess_{ij} (t)$ counts the number of jumps that the Markov chain $\markovchain$ has made from state $i$ to state $j$ up to time $t$.  Define the intensity process
\begin{equation} \label{EQNintensitymarkovmartingale}
 \markovcompensator_{ij} (t) := \markovgenerator_{ij} \, \indicatorfn_{ \{ \markovchain (t_{-}) = i \} }.
\end{equation}
If we compensate $\markovsquareqvprocess_{ij} (t)$ by $\int_{0}^{t} \markovcompensator_{ij} (s) \ud s$, then the resulting process
\begin{equation} \label{EQNcanonicalmarkovmartingale}
 \markovmartingale_{ij} (t) := \markovsquareqvprocess_{ij} (t) - \int_{0}^{t} \markovcompensator_{ij} (s) \ud s
\end{equation}
is a purely discontinuous, square-integrable martingale which is null at the origin (for example, see \citet[Lemma IV.21.12]{rogerswilliamsii.book}).  Note that the set of martingales $\{ \markovmartingale_{ij}; i,j \in \markovspace, i \neq j \}$ are mutually orthogonal.

\section{The control problem} \label{SECproblem}

Suppose for some $P \in \naturalnumbers$ we are given a set $\controlset \in \realnumbersP$ and a \emph{control process} $\vcontrol(t) = \vcontrol (\omega,t) : \Omega \times \timeset \rightarrow \controlset$.  We assume that the control $\vcontrol(t)$ is $\{ \filtration_{t} \}$-adapted and c\`{a}dl\`{a}g.  Consider the \emph{state variable} $\state (t) = (\state_{1} (t), \ldots, \state_{\BMdim} (t))^{\top}$  whose $n$th component satisfies the stochastic differential equation
\begin{equation} \label{EQNcontroleqn}
\ud \state_{n} (t) = \meanret_{n} (t, \state(t), \vcontrol(t), \markovchain(t_{-})) \ud t + \sum_{m=1}^{\BMdim} \varianceret_{nm} (t, \state(t), \control(t), \markovchain(t_{-})) \ud \BM_{m} (t),
\end{equation}
where $\meanret_{n}: \timeset \times \realnumbersN \times \realnumbersP \times \markovspace \rightarrow \realnumbers$ and $\varianceret_{nm}: \timeset \times \realnumbersN \times \realnumbersP \times \markovspace \rightarrow \realnumbers$ are given continuous functions for $n,m = 1, \ldots, \BMdim$.  Using $A^{\top}$ to denote the transpose of a matrix $A$, set $\meanret (t) := (\meanret_{1} (t), \ldots, \meanret_{\BMdim} (t))^{\top}$ and $\varianceret(t) := (\varianceret_{nm} (t))_{n,m=1}^{\BMdim}$.

We consider a performance criterion defined for each $\statezero \in \realnumbersN$ as
\begin{displaymath}
\Jfn^{(\vcontrol)} (\statezero) := \Jfn^{(\vcontrol)} (\statezero, \markovinitialstate) := \expectation \left( \int_{0}^{\timehorizon} \ffn (t, \state(t), \vcontrol(t), \markovchain(t)) \ud t + \gfn ( \state(\timehorizon), \markovchain(\timehorizon) ) \, \bigg\vert \, \state(0) = \statezero, \markovchain(0) = \markovinitialstate \right),
\end{displaymath}
where for each $i \in \markovspace$ we have that $\ffn(\cdot,\cdot,\cdot,i):\timeset \times \realnumbersN \times \controlset \rightarrow \realnumbers$ is continuous and $\gfn (\cdot, i):\realnumbersN \rightarrow \realnumbers$ is $C^{1} (\realnumbers)$ and concave.

We say that the control process $\vcontrol$ is admissible and write $\vcontrol \in \controladmiss$ if, for each $\statezero \in \realnumbersN$, (\ref{EQNcontroleqn}) has a unique, strong solution $\state(t) = \state^{(\vcontrol)} (t)$, $t \in \timeset$ satisfying both $\state(0) = \statezero$, a.s., and
\begin{displaymath}
\expectation \left( \int_{0}^{\timehorizon} \ffn (t, \state(t), \vcontrol(t), \markovchain(t)) \ud t + \gfn ( \state(\timehorizon), \markovchain(\timehorizon) ) \right) < \infty.
\end{displaymath}
The stochastic control problem is to find an optimal control $\vcontrolopt \in \controladmiss$ such that
\begin{equation} \label{EQNprobforSMP}
\Jfn^{(\vcontrolopt)} (\statezero) = \sup_{\vcontrol \in \controladmiss} \Jfn^{(\vcontrol)} (\statezero).
\end{equation}
Define the Hamiltonian $\hamiltonian : \timeset \times \realnumbersN \times \controlset \times \markovspace \times \realnumbersN \times \realnumbersNN \rightarrow \realnumbers$ by
\begin{equation} \label{EQNhamiltonian}
\hamiltonian (t,\statezero,\vcontrol,i,\vhamp,\vhamq) := \ffn(t,\statezero,\vcontrol,i) + \meanret^{\top} (t,\statezero,\vcontrol,i) \vhamp + \textrm{tr}(\varianceretT (t,\statezero,\vcontrol,i) \vhamq),
\end{equation}
where $\textrm{tr}(A)$ denotes the trace of the matrix $A$.  We assume that the Hamiltonian $\hamiltonian$ is differentiable with respect to $\statezero$.

The adjoint equation corresponding to $\vcontrol$ and $\state^{(\vcontrol)}$ in the unknown, adapted processes $\vhamp (t) \in \realnumbersN$, $\vhamq (t) \in \realnumbersNN$ and $\vhamr (t) = (\hamr^{(1)} (t), \ldots, \hamr^{(\BMdim)} (t) )^{\top}$, where $\hamr^{(n)} \in \realnumbersDD$ for $n = 1, \ldots, \BMdim$, is the backward stochastic differential equation
\begin{equation} \label{EQNadjoint}
\left\{ \begin{array}{ll}
 \ud \vhamp(t) & = - \nabla_{x} \hamiltonian (t,\state(t),\vcontrol(t),\markovchain(t),\vhamp(t),\vhamq(t)) \ud t + \vhamqT(t) \ud \vBM (t) + \hamr (t) \bullet \ud \markovmartingale (t) \\
\vhamp(\timehorizon) & = \nabla_{x} \gfn ( \state(\timehorizon), \markovchain(\timehorizon) ), \quad \textrm{a.s.}
\end{array} \right.
\end{equation}
where $\nabla_{x} \hamiltonian (t,\state(t),\vcontrol(t),\markovchain(t),\vhamp(t),\vhamq(t))$ denotes $\nabla_{x} \hamiltonian (t,x,\vcontrol(t),\markovchain(t),\vhamp(t),\vhamq(t)) \vert_{x=\state(t)}$, \\ $\nabla_{x} \gfn ( \state(\timehorizon), \markovchain(\timehorizon) )$ denotes $\nabla_{x} \gfn ( x, \markovchain(\timehorizon) ) \vert_{x=\state(\timehorizon)}$ and, for notational convenience, we define
\begin{displaymath}
\hamr (t) \bullet \ud \markovmartingale (t) := \left( \sum_{j \neq i} \hamr_{ij}^{(1)} (t) \ud \markovmartingale_{ij} (t), \cdots, \sum_{j \neq i} \hamr_{ij}^{(\BMdim)} (t) \ud \markovmartingale_{ij} (t) \right)^{\top},
\end{displaymath}
 for all $t \in [0,\timehorizon)$.  Note that we use throughout this paper $\sum_{j \neq i}$ as shorthand for $\sum_{i=1}^{\markovdim} \sum_{\substack{j=1, \\ j \neq i}}^{\markovdim}$.

\begin{rmk}
Notice that there are jumps in the adjoint equation (\ref{EQNadjoint}) even though there are no jumps in the equation (\ref{EQNcontroleqn}) which governs the state variable $\state (t)$.  This is a consequence of the coefficients $\meanret (t)$ and $\varianceret (t)$ being functions of the Markov chain $\markovchain (t)$.  Moreover, the unknown process $\vhamr (t)$ in the adjoint equations (\ref{EQNadjoint}) does not appear in the Hamiltonian (\ref{EQNhamiltonian}).
\end{rmk}

%
%

\section{Sufficient stochastic maximum principle} \label{SECsmp}

Here we state and prove the sufficient stochastic maximum principle.  In Section \ref{SECexample}, we apply it to a \QLM problem.
\begin{thm}[Sufficient stochastic maximum principle] \label{THMstocmax}
 Let $\vcontrolhat \in \controladmiss$ with corresponding solution $\statehat = \state^{(\vcontrolhat)}$ and suppose that there exists a solution $(\vhamhatp (t),\vhamhatq (t), \vhamhatr (t))$ of the corresponding adjoint equation (\ref{EQNadjoint}) satisfying
\begin{equation} \label{EQNsmpfiniteone}
\expectation \int_{0}^{\timehorizon} \bigg\lVert \left( \varianceret (t,\statehat (t)) - \varianceret (t,\state^{(\control)} (t)) \right)^{\top} \vhamhatp (t) \bigg\rVert^{2} \ud t < \infty,
\end{equation}
\begin{equation} \label{EQNsmpfinitetwo}
 \expectation \int_{0}^{\timehorizon} \bigg\lVert \vhamhatq^{\top} (t) \left( \statehat (t) - \state^{(\control)} (t) \right)  \bigg\rVert^{2} \ud t < \infty,
\end{equation}
and
\begin{equation} \label{EQNsmpfinitethree}
\sum_{n=1}^{\BMdim} \sum_{j \neq i} \expectation \int_{0}^{\timehorizon}  \bigg\lvert \left( \statehat_{n} (t) - \state^{(\vcontrol)}_{n} (t) \right) \hamhatr_{ij}^{(n)} (t) \bigg\rvert^{2} \ud \markovangleqvprocessij (t) < \infty,
\end{equation}
for all admissible controls $\control \in \controladmiss$.  Further suppose that
\begin{enumerate}
 \item $\hamiltonian (t,\statehat(t),\vcontrolhat(t),\markovchain(t),\vhamhatp(t),\vhamhatq(t)) = \sup_{v \in \controlset} \hamiltonian (t,\statehat(t),v,\markovchain(t),\vhamhatp(t),\vhamhatq(t)), \quad \forall t \in \timeset$,
\item $\gfn(\statezero, i)$ is a concave function of $\statezero$ for each $i \in \markovspace$, and
\item for each fixed pair $(t,i) \in \timeset \times \markovspace$, $\hamiltonianhat (\statezero) := \max_{v \in \controlset} \hamiltonian (t,\statezero,v,i,\vhamhatp(t),\vhamhatq(t))$ exists and is a concave function of $\statezero$. 
\end{enumerate}
Then $\vcontrolhat$ is an optimal control.
\end{thm}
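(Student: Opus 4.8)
The plan is to show directly that $\Jfn^{(\vcontrolhat)}(\statezero) - \Jfn^{(\control)}(\statezero) \geq 0$ for every admissible $\control$. Writing $\state := \state^{(\control)}$ for the trajectory under $\control$ (so both $\statehat$ and $\state$ start at $\statezero$), this difference splits into a running part $\expectation\int_{0}^{\timehorizon}(\ffn(t,\statehat,\vcontrolhat,\markovchain) - \ffn(t,\state,\control,\markovchain))\ud t$ and a terminal part $\expectation(\gfn(\statehat(\timehorizon),\markovchain(\timehorizon)) - \gfn(\state(\timehorizon),\markovchain(\timehorizon)))$. First I would dispose of the terminal part: since $\gfn(\cdot,i)$ is concave and $C^{1}$, the gradient inequality for concave functions together with the terminal condition $\vhamhatp(\timehorizon) = \nabla_{x}\gfn(\statehat(\timehorizon),\markovchain(\timehorizon))$ yields the lower bound $\gfn(\statehat(\timehorizon)) - \gfn(\state(\timehorizon)) \geq \vhamhatp(\timehorizon)^{\top}(\statehat(\timehorizon) - \state(\timehorizon))$.

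The core step is to evaluate $\expectation[\vhamhatp(\timehorizon)^{\top}(\statehat(\timehorizon) - \state(\timehorizon))]$ by applying the It\^{o} product rule to $\vhamhatp(t)^{\top}(\statehat(t) - \state(t))$ on $\timeset$. Because both trajectories start at $\statezero$, the product vanishes at $t=0$, so this expectation equals the expectation of the integrated differential. Crucially, $\statehat - \state$ is continuous, as the state equation (\ref{EQNcontroleqn}) carries no jump term, so the only contribution to the quadratic covariation with the jump-carrying adjoint process $\vhamhatp$ comes from the Brownian parts, producing a cross term $\textrm{tr}\left((\varianceret(t,\statehat) - \varianceret(t,\state))^{\top}\vhamhatq(t)\right)\ud t$. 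The $\ud\vBM$ and $\ud\markovmartingale$ stochastic integrals appearing are genuine martingales precisely under the integrability hypotheses (\ref{EQNsmpfiniteone}), (\ref{EQNsmpfinitetwo}) and (\ref{EQNsmpfinitethree}), so they drop out on taking expectations. Collecting the drift and cross terms and using the explicit form (\ref{EQNhamiltonian}) of $\hamiltonian$, the surviving integrand rearranges into $\hamiltonian(t,\statehat,\vcontrolhat,\markovchain,\vhamhatp,\vhamhatq) - \hamiltonian(t,\state,\control,\markovchain,\vhamhatp,\vhamhatq) - (\ffn(t,\statehat,\vcontrolhat,\markovchain) - \ffn(t,\state,\control,\markovchain)) - (\statehat - \state)^{\top}\nabla_{x}\hamiltonian(t,\statehat,\vcontrolhat,\markovchain,\vhamhatp,\vhamhatq)$.

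Substituting this back, the $\ffn$-contributions cancel exactly against the running part, and the whole problem reduces to proving, under the expectation, the pointwise inequality
\[
\hamiltonian(t,\statehat,\vcontrolhat,\markovchain,\vhamhatp,\vhamhatq) - \hamiltonian(t,\state,\control,\markovchain,\vhamhatp,\vhamhatq) - (\statehat - \state)^{\top}\nabla_{x}\hamiltonian(t,\statehat,\vcontrolhat,\markovchain,\vhamhatp,\vhamhatq) \geq 0.
\]
Here I would invoke the maximized Hamiltonian $\hamiltonianhat$ of condition 3. The maximum condition 1 gives $\hamiltonian(t,\statehat,\vcontrolhat,\markovchain,\vhamhatp,\vhamhatq) = \hamiltonianhat(\statehat)$, while by definition $\hamiltonian(t,\state,\control,\markovchain,\vhamhatp,\vhamhatq) \leq \hamiltonianhat(\state)$; concavity of $\hamiltonianhat$ then bounds $\hamiltonianhat(\statehat) - \hamiltonianhat(\state)$ below by $(\statehat - \state)^{\top}\nabla\hamiltonianhat(\statehat)$.

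The hard part will be the envelope identification $\nabla\hamiltonianhat(\statehat) = \nabla_{x}\hamiltonian(t,\statehat,\vcontrolhat,\markovchain,\vhamhatp,\vhamhatq)$ needed to close the inequality. The clean route is to observe that $x \mapsto \hamiltonianhat(x) - \hamiltonian(t,x,\vcontrolhat,\markovchain,\vhamhatp,\vhamhatq)$ is nonnegative everywhere and vanishes at $x=\statehat$ by condition 1, so $\statehat$ is a global minimizer; comparing the one-sided directional derivatives of the concave $\hamiltonianhat$ against the gradient of the differentiable map $x \mapsto \hamiltonian(t,x,\vcontrolhat,\markovchain,\vhamhatp,\vhamhatq)$ forces $\hamiltonianhat$ to be differentiable at $\statehat$ with the asserted gradient. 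With this identity the pointwise inequality is immediate, and integrating and taking expectations finishes the proof. I expect the second and fourth steps — justifying the vanishing of the covariation jump term and of the martingale integrals, and the envelope argument for a merely concave $\hamiltonianhat$ — to be where the real care is required.
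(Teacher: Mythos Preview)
Your proposal is correct and follows essentially the same route as the paper's proof: concavity of $\gfn$ for the terminal bound, It\^{o} product rule on $\vhamhatp^{\top}(\statehat-\state)$ with the integrability conditions killing the stochastic integrals, and then the concavity/envelope argument on $\hamiltonianhat$ to handle the residual pointwise inequality. The only difference is that you spell out the envelope identification $\nabla\hamiltonianhat(\statehat)=\nabla_{x}\hamiltonian(t,\statehat,\vcontrolhat,\ldots)$ explicitly, whereas the paper simply cites \citet[pages 83--84]{framstadetal04.article} for that step.
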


\begin{proof}
Fix $\vcontrol \in \controladmiss$ with corresponding solution $\state = \state^{(\vcontrol)}$.  For notational ease, denote the quadruple $(t,\statehat(t_{-}),\vcontrolhat(t_{-}),\markovchain(t_{-}))$ by $(t,\statehat(t_{-}))$ and similarly denote the quadruple $(t,\state(t_{-}),\vcontrol(t_{-}),\markovchain(t_{-}))$ by $(t,\state(t_{-}))$.  Then
\begin{displaymath}
 \Jfn(\vcontrolhat) - \Jfn(\vcontrol) = \expectation \left( \int_{0}^{\timehorizon} \left( \ffn(t,\statehat(t)) - \ffn(t,\state(t)) \right) \ud t + \gfn( \statehat(\timehorizon) , \markovchain(\timehorizon) ) - \gfn( \state(\timehorizon), \markovchain(\timehorizon) ) \right).
\end{displaymath}
We use the concavity of $\gfn(\cdot, i)$ for each $i \in \markovspace$ and (\ref{EQNadjoint}) to obtain the inequalities
\begin{displaymath}
\begin{split}
  \expectation \left( \gfn( \statehat(\timehorizon), \markovchain(\timehorizon) ) - \gfn( \state(\timehorizon), \markovchain(\timehorizon) ) \right) & \geq \expectation \left( \left( \statehat(\timehorizon) - \state(\timehorizon) \right)^{\top} \nabla_{x} \gfn \left( \statehat(\timehorizon), \markovchain(\timehorizon) \right) \right) \\
& \geq \expectation \left( \left( \statehat(\timehorizon) - \state(\timehorizon) \right)^{\top} \vhamhatp(\timehorizon) \right).
\end{split}
\end{displaymath}
This gives
\begin{equation} \label{EQNJoptone}
\Jfn(\vcontrolhat) - \Jfn(\vcontrol) \geq \expectation \int_{0}^{\timehorizon} \left( \ffn(t, \statehat(t_{-})) - \ffn(t, \state(t_{-})) \right) \ud t + \expectation \left( \left( \statehat(\timehorizon) - \state(\timehorizon) \right)^{\top} \vhamhatp(\timehorizon) \right).
\end{equation}
To expand the first term on the right-hand side of (\ref{EQNJoptone}), we use the definition of $\hamiltonian$ in (\ref{EQNhamiltonian}) to obtain
\begin{equation} \label{EQNJopttwo}
\begin{split}
& \expectation \int_{0}^{\timehorizon} \left( \ffn(t, \statehat(t)) - \ffn(t, \state(t)) \right) \ud t \\
& = \expectation \int_{0}^{\timehorizon} \left( \hamiltonian (t,\statehat(t),\vcontrolhat(t),\markovchain(t),\vhamhatp(t),\vhamhatq(t)) - \hamiltonian (t,\state(t),\vcontrol(t),\markovchain(t),\vhamhatp(t),\vhamhatq(t)) \right) \ud t \\
& - \expectation \int_{0}^{\timehorizon} \left( \left( \meanret (t, \statehat(t)) - \meanret (t, \state(t)) \right)^{\top} \vhamhatp(t) + \textrm{tr} \left( \varianceret (t, \statehat(t)) - \varianceret (t, \state(t)) \right)^{\top} \vhamhatq(t) \right) \ud t.
\end{split}
\end{equation}
To expand the second term on the right-hand side of (\ref{EQNJoptone}) we begin by applying integration-by-parts to get
\begin{displaymath}
 \left( \statehat(\timehorizon) - \state(\timehorizon) \right)^{\top} \vhamhatp(\timehorizon) = \int_{0}^{\timehorizon} \left( \statehat (t) - \state (t) \right)^{\top} \ud \vhamhatp (t) + \int_{0}^{\timehorizon} \vhamhatp^{\top} (t) \ud \left( \statehat (t) - \state (t) \right) + \left[ \statehat - \state, \vhamhatp \right] (\timehorizon).
\end{displaymath}
Substitute for $\state$, $\statehat$ and $\vhamhatp$ from (\ref{EQNcontroleqn}) and (\ref{EQNadjoint}) to find
\begin{displaymath}
\begin{split}
& \left( \statehat(\timehorizon) - \state(\timehorizon) \right)^{\top} \vhamhatp(\timehorizon) \\
& = \int_{0}^{\timehorizon} \left( \statehat (t) - \state (t) \right)^{\top} \left( - \nabla_{x} \hamiltonian (t,\statehat(t),\vcontrolhat(t),\markovchain(t),\vhamhatp(t),\vhamhatq(t)) \ud t + \vhamhatqT(t) \ud \vBM (t) + \hamhatr (t) \bullet \ud \markovmartingale (t) \right) \\
& + \int_{0}^{\timehorizon} \vhamhatp^{\top} (t) \left( \left( \meanret (t, \statehat(t)) - \meanret (t, \state(t)) \right) \ud t +  \left( \varianceret (t, \statehat(t)) - \varianceret (t, \state(t)) \right)^{\top} \ud \vBM (t) \right) \\
& + \int_{0}^{\timehorizon} \textrm{tr} \left( \vhamhatqT (t) \left( \varianceret (t, \statehat(t)) - \varianceret (t, \state(t)) \right) \right) \ud t.
\end{split}
\end{displaymath}
Due to the integrability conditions (\ref{EQNsmpfiniteone})-(\ref{EQNsmpfinitethree}), the Brownian motion and Markov chain martingale integrals in the latter equation are square-integrable martingales which are null at the origin.  Thus taking expectations we obtain
\begin{displaymath}
\begin{split}
 & \expectation \left( \left( \statehat(\timehorizon) - \state(\timehorizon) \right)^{\top} \vhamhatp(\timehorizon) \right) \\
& = \expectation \int_{0}^{\timehorizon} \left( - \left( \statehat (t) - \state (t) \right)^{\top} \nabla_{x} \hamiltonian(t,\statehat(t),\vcontrolhat(t),\markovchain(t),\vhamhatp(t),\vhamhatq(t)) \right) \ud t \\
& + \expectation \int_{0}^{\timehorizon} \left( \vhamhatp^{\top} (t) \left( \meanret (t, \statehat(t)) - \meanret (t, \state(t)) \right) + \textrm{tr} \left( \vhamhatqT (t) \left( \varianceret (t, \statehat(t)) - \varianceret (t, \state(t)) \right) \right) \right) \ud t.
\end{split}
\end{displaymath}
Substitute the last equation and (\ref{EQNJopttwo}) into the inequality (\ref{EQNJoptone}) to find after cancellation that
\begin{equation} \label{EQNinequJ}
\begin{split}
\Jfn(\vcontrolhat) - \Jfn(\vcontrol) & \geq \expectation \int_{0}^{\timehorizon} \bigg( \hamiltonian (t,\statehat(t),\vcontrolhat(t),\markovchain(t),\vhamhatp(t),\vhamhatq(t)) - \hamiltonian (t,\state(t),\vcontrol(t),\markovchain(t),\vhamhatp(t),\vhamhatq(t)) \\
& - \left( \statehat (t) - \state (t) \right)^{\top} \nabla_{x} \hamiltonian (t,\statehat(t),\vcontrolhat(t),\markovchain(t),\vhamhatp(t),\vhamhatq(t)) \bigg) \ud t.
\end{split}
\end{equation}
We can show that the integrand on the right-hand side of (\ref{EQNinequJ}) is non-negative a.s. for each $t \in \timeset$ by fixing the state of the Markov chain and then using the assumed concavity of $\hamiltonianhat (\statezero)$ to apply the argument of \citet[pages 83-84]{framstadetal04.article}.  This gives $\Jfn(\vcontrolhat) - \Jfn(\vcontrol) \geq 0$ and hence $\vcontrolhat$ is optimal.
\end{proof}

%
%

\section{Connection to Dynamic Programming} \label{SECdynamic}
In a jump-diffusion setting, the connection between the stochastic maximum principle and dynamic programming principle is shown in \citet[Section 3]{framstadetal04.article}.  We show a similar connection in Theorem \ref{THMreltoDP}, between the value function $V(t,x,i)$ of the control problem and the adjoint processes $\hamp (t)$, $\hamq (t)$ and $\hamr (t)$.  The main difference is that, in the regime-switching diffusion model, the adjoint process $\vhamr_{ij} (t)$ represents the jumps of the $x$-gradient of the value function due to the Markov chain switching from state $i$ to state $j$.  In the non-regime-switching jump-diffusion model, this adjoint process represents the jumps of the $x$-gradient of the value function due to the jumps in the state process $X(t)$.

To put the problem in a Markovian framework so that we can apply dynamic programming, define
\begin{displaymath}
 \Jfncontrol (s,x,i) := \expectation \left( \int_{s}^{\timehorizon} \ffn \left( t, X (t), \control(t), \markovchain(t) \right) \ud t + \gfn( X (\timehorizon), \markovchain(\timehorizon) ) \, \bigg\vert X(s)=x, \markovchain(s) = i \right), \quad \forall \vcontrol \in \controladmiss,
\end{displaymath}
and put
\begin{equation} \label{EQNprobforDP}
  \valuefn (s,x,i) := \sup_{\vcontrol \in \controladmiss} \Jfncontrol (s,x,i),
\end{equation}
for all $(s,x,i) \in \timeset \times \realnumbersN \times \markovspace$.
\begin{thm} \label{THMreltoDP}
 Assume that $V(\cdot,\cdot,i) \in C^{1,3}(\timeset \times \realnumbersN)$ for each $i \in \markovspace$ and that there exists an optimal Markov control $\vcontrolopt(t,x,i)$ for (\ref{EQNprobforDP}), with corresponding solution $\stateopt = \state^{(\vcontrolopt)}$.  Define
\begin{equation} \label{EQNdphamp}
\hamp_{n}(t) := \frac{\partial \valuefn}{\partial x_{n}} (t, \stateopt(t), \markovchain(t)),
\end{equation}
\begin{equation} \label{EQNdphamq}
\hamq_{nm} (t) := \sum_{l=1}^{\BMdim} \varianceret_{lm} (t, \stateopt(t), \vcontrolopt(t), \markovchain(t)) \, \frac{\partial^{2} \valuefn}{\partial x_{n} \partial x_{l}} (t, \stateopt(t), \markovchain(t)),
\end{equation}
\begin{equation} \label{EQNdphamr}
\vhamr_{ij}^{(n)} (t) := \frac{\partial \valuefn}{\partial x_{n} } (t, \stateopt(t), j) - \frac{\partial \valuefn}{\partial x_{n}} (t, \stateopt(t), i).
\end{equation}
Then $\vhamp(t)$, $\vhamq(t)$ and $\vhamr(t)$ solve the adjoint equation (\ref{EQNadjoint}).
\end{thm}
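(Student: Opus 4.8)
The plan is to obtain the adjoint equation (\ref{EQNadjoint}) for $(\hamp,\hamq,\hamr)$ directly from the dynamic programming equation for $\valuefn$, by applying the Itô formula for a regime-switching diffusion to the spatial gradient of $\valuefn$ along the optimal trajectory. First I would record the Hamilton--Jacobi--Bellman equation: under the assumed smoothness, for each $i\in\markovspace$ the value function satisfies
\[ 0 = \valuefn_{t}(t,\statezero,i) + \sup_{v\in\controlset}\Big\{ \ffn(t,\statezero,v,i) + \sum_{n}\meanret_{n}\,\valuefn_{x_{n}} + \tfrac12\sum_{n,l}(\varianceret\varianceretT)_{nl}\,\valuefn_{x_{n}x_{l}} + \sum_{j=1}^{\markovdim}\markovgenerator_{ij}\,\valuefn(t,\statezero,j)\Big\}, \]
with the supremum attained at $v=\vcontrolopt(t,\statezero,i)$ and terminal data $\valuefn(\timehorizon,\statezero,i)=\gfn(\statezero,i)$. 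The terminal condition in (\ref{EQNadjoint}) is then immediate from (\ref{EQNdphamp}), since differentiating $\valuefn(\timehorizon,\cdot,i)=\gfn(\cdot,i)$ gives $\hamp_{n}(\timehorizon)=\valuefn_{x_{n}}(\timehorizon,\stateopt(\timehorizon),\markovchain(\timehorizon))=\tfrac{\partial\gfn}{\partial x_{n}}(\stateopt(\timehorizon),\markovchain(\timehorizon))$.

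Next I would apply the regime-switching Itô formula to $\hamp_{n}(t)=\valuefn_{x_{n}}(t,\stateopt(t),\markovchain(t))$. The continuous martingale part reads $\sum_{m}\big(\sum_{l}\varianceret_{lm}\,\valuefn_{x_{n}x_{l}}\big)\ud\BM_{m}(t)=\sum_{m}\hamq_{nm}(t)\ud\BM_{m}(t)$, which matches the definition (\ref{EQNdphamq}) of $\hamq$; and, writing the jump part in compensated form via (\ref{EQNcanonicalmarkovmartingale}), a switch $i\to j$ increments $\valuefn_{x_{n}}$ by $\valuefn_{x_{n}}(t,\stateopt,j)-\valuefn_{x_{n}}(t,\stateopt,i)=\hamr_{ij}^{(n)}(t)$, giving the martingale term $\hamr(t)\bullet\ud\markovmartingale(t)$ as required by (\ref{EQNdphamr}). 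The remaining drift is the full generator applied to $\valuefn_{x_{n}}$,
\[ \valuefn_{tx_{n}}+\sum_{n'}\meanret_{n'}\valuefn_{x_{n}x_{n'}}+\tfrac12\sum_{n',l'}(\varianceret\varianceretT)_{n'l'}\valuefn_{x_{n}x_{n'}x_{l'}}+\sum_{j=1}^{\markovdim}\markovgenerator_{ij}\valuefn_{x_{n}}(t,\stateopt,j), \]
evaluated along $(t,\stateopt(t),\vcontrolopt(t),\markovchain(t))$; here the assumed $C^{1,3}$-regularity is exactly what is needed to make sense of the third-order term. It then remains to identify this drift with $-\partial\hamiltonian/\partial x_{n}$.

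To this end I would differentiate the HJB equation with respect to $x_{n}$. Because the supremum is attained at $\vcontrolopt$, the envelope (first-order optimality) argument allows the contributions from $\partial\vcontrolopt/\partial x_{n}$ to be dropped, and after solving for the first-, third-order and generator terms this yields
\[ \valuefn_{tx_{n}}+\sum_{n'}\meanret_{n'}\valuefn_{x_{n}x_{n'}}+\tfrac12\sum_{n',l'}(\varianceret\varianceretT)_{n'l'}\valuefn_{x_{n}x_{n'}x_{l'}}+\sum_{j=1}^{\markovdim}\markovgenerator_{ij}\valuefn_{x_{n}}(t,\stateopt,j) = -\frac{\partial\ffn}{\partial x_{n}}-\sum_{n'}\frac{\partial\meanret_{n'}}{\partial x_{n}}\valuefn_{x_{n'}}-\tfrac12\sum_{n',l'}\frac{\partial(\varianceret\varianceretT)_{n'l'}}{\partial x_{n}}\valuefn_{x_{n'}x_{l'}}. \]
Thus the drift of $\hamp_{n}$ equals the right-hand side above, evaluated with $\hamp=\nabla_{x}\valuefn$.

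The step I expect to be the main obstacle is matching this right-hand side to $\partial\hamiltonian/\partial x_{n}=\tfrac{\partial\ffn}{\partial x_{n}}+\sum_{n'}\tfrac{\partial\meanret_{n'}}{\partial x_{n}}\hamp_{n'}+\sum_{n',m}\tfrac{\partial\varianceret_{n'm}}{\partial x_{n}}\hamq_{n'm}$, because (\ref{EQNdphamq}) defines $\hamq$ with no factor $\tfrac12$ while the diffusion term above carries one. The reconciliation rests on the symmetry of the Hessian of $\valuefn$: substituting (\ref{EQNdphamq}) and expanding $(\varianceret\varianceretT)_{n'l'}=\sum_{m}\varianceret_{n'm}\varianceret_{l'm}$ by the product rule, then relabelling $n'\leftrightarrow l'$ using $\valuefn_{x_{n'}x_{l'}}=\valuefn_{x_{l'}x_{n'}}$, gives
\[ \sum_{n',m}\frac{\partial\varianceret_{n'm}}{\partial x_{n}}\hamq_{n'm} = \sum_{n',l'}\valuefn_{x_{n'}x_{l'}}\sum_{m}\frac{\partial\varianceret_{n'm}}{\partial x_{n}}\varianceret_{l'm} = \tfrac12\sum_{n',l'}\valuefn_{x_{n'}x_{l'}}\frac{\partial(\varianceret\varianceretT)_{n'l'}}{\partial x_{n}}. \]
This supplies precisely the missing $\tfrac12$, so that the drift equals $-\partial\hamiltonian/\partial x_{n}$ and $(\hamp,\hamq,\hamr)$ solves (\ref{EQNadjoint}).
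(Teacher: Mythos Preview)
Your proposal is correct and follows essentially the same route as the paper: write down the HJB equation, apply the regime-switching It\^{o} formula to $\valuefn_{x_n}$ along the optimal trajectory to read off the martingale parts as $\hamq$ and $\hamr$, differentiate the HJB equation in $x_n$ (using first-order optimality to discard the $\partial\vcontrolopt/\partial x_n$ terms) to identify the drift, and finally reconcile the $\tfrac12$ in the $\partial(\varianceret\varianceretT)/\partial x_n$ term with the trace form in $\partial\hamiltonian/\partial x_n$ via the symmetry of the Hessian of $\valuefn$---which is exactly the identity the paper isolates as (\ref{EQNdpfnrearrangesigma}). Your only additions are an explicit check of the terminal condition and an explicit invocation of the envelope argument, both of which the paper leaves implicit.
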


\begin{rmk}
To prove the above theorem, we require It{\^o}'s formula, which is given next.  It{\^o}'s formula can be found in \citet[Theorem 18, page 278]{protter.book}.
\end{rmk}

\begin{thm}[It{\^o}'s formula] \label{THMitoformula}
Suppose we are given an $\BMdim$-dimensional process $\ITOX = (\ITOX_{1}, \ldots, \ITOX_{\BMdim} )^{\top}$ satisfying for each $n=1, \ldots, \BMdim$
\begin{displaymath}
\begin{split}
 \ud \ITOX_{n} (t) & = \meanret_{n} (t, \vITOX (t), \markovchain(t_{-})) \ud t + \sum_{m=1}^{\BMdim} \varianceret_{nm} (t, \vITOX (t), \markovchain(t_{-})) \ud \BM_{m} (t) \\
\ITOX_{n} (0) & = x_{0}^{(n)}, \textrm{ a.s.},
\end{split}
\end{displaymath}
for some $x_{0}^{(n)} \in \realnumbers$, and functions $V(\cdot,\cdot,i) \in C^{1,3}(\timeset \times \realnumbersN)$ for each $i=1,\ldots,\markovdim$.  Then
\begin{displaymath}
\begin{split}
 V(t,\vITOX (t),\markovchain (t)) & = V(0,\vITOX(0),\markovchain(0)) + \int_{0}^{t} \Gamma V(s,\vITOX (s),\markovchain(s_{-})) \ud s \\
& + \sum_{n=1}^{\BMdim} \int_{0}^{t} \frac{\partial V}{\partial x_{n}} (s,\vITOX (s),\markovchain(s_{-})) \sum_{m=1}^{\BMdim} \varianceret_{nm} (s,\vITOX (s),\markovchain(s_{-})) \ud \BM_{m} (s) \\
& + \sum_{j \neq i} \int_{0}^{t} \left( V(s,\vITOX (s),j) - V(s,\vITOX (s),i) \right) \ud \markovmartingale_{ij} (t),
\end{split}
\end{displaymath}
for
\begin{displaymath}
\begin{split}
\Gamma V(t,x,i) & := \frac{\partial V}{\partial t} (t,x,i) + \sum_{n=1}^{\BMdim} \frac{\partial V}{\partial x_{n}} (t,x,i) \meanret_{n} (t,x,i) \\
& + \frac{1}{2} \sum_{n=1}^{\BMdim} \sum_{m=1}^{\BMdim} \frac{\partial^{2} V}{\partial x_{n} \partial x_{m}} (t,x,i) \sum_{l=1}^{\BMdim} \varianceret_{nl} (t,x,i) \varianceret_{ml} (t,x,i) \\
& + \sum_{j=1}^{\markovdim} \markovgenerator_{ij} \left( V(t, x, j) - V(t, x, i) \right),
\end{split}
\end{displaymath}
for all $(t,x,i) \in \timeset \times \realnumbersN \times \markovspace$.
\end{thm}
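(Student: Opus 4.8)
The plan is to apply the general It{\^o} formula for semimartingales, \citet[Theorem 18]{protter.book}, to the process $V(t,\vITOX(t),\markovchain(t))$, the one genuine subtlety being that $V$ depends on the discrete Markov-chain argument $\markovchain$, which is not a variable one can differentiate in. The guiding observation is that $\vITOX$ is a continuous semimartingale while $\markovchain$ is a pure-jump, finite-variation process, so the two sources of evolution decouple cleanly: the continuous part of $V(t,\vITOX(t),\markovchain(t))$ is driven entirely by $\vITOX$ and $t$ (with $\markovchain$ held at its current value), whereas every jump of $V$ is caused by a jump of $\markovchain$. I would therefore treat $V$ as $C^{1,2}$ in $(t,x)$ for each fixed state $i$ (the assumed $C^{1,3}$ regularity being more than enough) and split the change-of-variables formula into a continuous contribution and a pure-jump contribution, with $V(0,\vITOX(0),\markovchain(0))$ entering as the initial value.

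For the continuous contribution I would apply the classical time-dependent It{\^o} formula on the stochastic intervals between successive chain jumps, on each of which $\markovchain$ is constant. This produces $\partial_t V$, the first-order drift terms $\sum_{n} \partial_{x_n} V \, \meanret_{n}$, the Brownian integral $\sum_{n} \int_0^t \partial_{x_n}V \sum_{m} \varianceret_{nm} \ud \BM_m$, and the second-order term $\tfrac12 \sum_{n,m} \partial^2_{x_n x_m} V \, \ud [\vITOX_n,\vITOX_m]^{c}$, where $\ud[\vITOX_n,\vITOX_m]^{c} = \sum_{l} \varianceret_{nl}\varianceret_{ml}\ud t$ by the Brownian dynamics of $\vITOX$. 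Collecting the drift, time-derivative and second-order integrands reproduces every term of $\Gamma V$ except the generator term $\sum_{j} \markovgenerator_{ij}(V(t,x,j)-V(t,x,i))$, while the martingale integral against $\vBM$ is exactly the Brownian term in the statement.

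For the jump contribution I would use that $t$ and $\vITOX$ are continuous, so at a jump time $s$ of $\markovchain$ one has $\Delta \vITOX(s)=0$ and the only jump in $V$ is $V(s,\vITOX(s),\markovchain(s)) - V(s,\vITOX(s),\markovchain(s_{-}))$; in particular there are no first-order Taylor corrections in $x$ and no mixed covariation between $\vITOX$ and $\markovchain$. Summing these jumps over $(0,t]$ and encoding the transitions through the counting processes gives $\sum_{j\neq i}\int_0^t (V(s,\vITOX(s),j)-V(s,\vITOX(s),i))\ud \markovsquareqvprocess_{ij}(s)$. Writing $\ud \markovsquareqvprocess_{ij} = \ud\markovmartingale_{ij} + \markovgenerator_{ij}\indicatorfn_{\{\markovchain(s_{-})=i\}}\ud s$ from (\ref{EQNintensitymarkovmartingale})--(\ref{EQNcanonicalmarkovmartingale}) splits this into the martingale term $\sum_{j\neq i}\int_0^t (V(s,\vITOX(s),j)-V(s,\vITOX(s),i))\ud\markovmartingale_{ij}(s)$ appearing in the statement, plus a compensator integral. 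The indicator collapses the outer summation to the current state $\markovchain(s_{-})$, and, since the $j=i$ summand vanishes, the compensator integral equals $\int_0^t \sum_{j=1}^{\markovdim}\markovgenerator_{\markovchain(s_{-})j}(V(s,\vITOX(s),j)-V(s,\vITOX(s),\markovchain(s_{-})))\ud s$, which is precisely the missing generator term of $\Gamma V$. Merging this with the continuous contribution assembles $\int_0^t \Gamma V(s,\vITOX(s),\markovchain(s_{-}))\ud s$ and completes the identity.

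I expect the main obstacle to be the rigorous justification of the continuous/jump separation rather than any single calculation: one must verify that $[\vITOX_n,\markovmartingale_{ij}]\equiv 0$ (so that the chain contributes no continuous quadratic variation and no cross terms with the Brownian integrals), that the almost surely finitely many chain jumps on $\timeset$ make the jump sum well defined and telescoping, and that the local-martingale integrals against $\vBM$ and $\markovmartingale_{ij}$ are controlled by localization. A secondary point is the careful bookkeeping in identifying the compensator integral with the generator term, using $\markovcompensator_{ij}(s)=\markovgenerator_{ij}\indicatorfn_{\{\markovchain(s_{-})=i\}}$ together with the vanishing of the diagonal summand, so that evaluation at $\markovchain(s_{-})$ (rather than at $\markovchain(s)$) is the correct one.
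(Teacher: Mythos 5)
Your derivation is correct, but note that the paper does not actually prove Theorem \ref{THMitoformula}: it is stated as a known result, with the remark preceding it pointing to \citet[Theorem 18, page 278]{protter.book}, and it is then used as a tool in the proof of Theorem \ref{THMreltoDP}. So there is no in-paper argument to compare against; what you have written out is precisely the derivation the paper outsources to the literature. Your route --- freeze the chain on the stochastic intervals between its a.s.\ finitely many jump times and apply the classical $C^{1,2}$ It\^{o} formula there, add the pure-jump increments $V(s,X(s),\alpha(s))-V(s,X(s),\alpha(s_{-}))$ (which are the only jumps, since $X$ and $t$ are continuous), encode the jump sum as integrals against the counting processes $N_{ij}$, and split $\ud N_{ij}(s)=\ud M_{ij}(s)+\markovgenerator_{ij}\indicatorfn_{\{\alpha(s_{-})=i\}}\ud s$ via \eqref{EQNintensitymarkovmartingale}--\eqref{EQNcanonicalmarkovmartingale} so that the compensator reassembles the generator term of $\Gamma V$ --- is the standard proof and is sound. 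The verifications you flag are exactly the right ones: $\Delta X\equiv 0$ forces $[X_{n},M_{ij}]\equiv 0$ (the $M_{ij}$ are quadratic pure-jump, so there is no continuous covariation either), the diagonal summand $j=i$ vanishes identically so extending the generator sum over all $j\in\markovspace$ is free, and evaluation of the rates at $\alpha(s_{-})$ is forced by the form of the intensity; since the chain jumps only finitely often on $\timeset$, one also has $\alpha(s_{-})=\alpha(s)$ for Lebesgue-a.e.\ $s$, which reconciles the $\ud s$-integrand $\Gamma V(s,X(s),\alpha(s_{-}))$ with its right-continuous variant, while the left limits in the $\ud\BM$ and $\ud M_{ij}$ integrands keep those integrands previsible. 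One small economy: localization is not needed for the identity itself, which is pathwise; it only matters when one later wants the stochastic-integral terms to be true (square-integrable) martingales, as in the integrability hypotheses \eqref{EQNsmpfiniteone}--\eqref{EQNsmpfinitethree} of Theorem \ref{THMstocmax}. Compared with citing Protter's general semimartingale change-of-variables theorem, your argument buys a self-contained, elementary proof tailored to the regime-switching diffusion structure, at the cost of the interval-by-interval bookkeeping; the paper's citation buys brevity and generality.
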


\begin{proof}[Proof of Theorem \ref{THMreltoDP}]
From general dynamic programming theory, the Hamilton-Jacobi-Bellman equation holds:
\begin{displaymath}
\frac{\partial \valuefn}{\partial t} (t,x,i) + \sup_{\vcontrol \in \controlset} \left\{ \ffn (t,x,u,i) + \mathcal{A}^{u} \valuefn(t, x, i) \right\} = 0,
\end{displaymath}
where $\mathcal{A}^{u}$ is the infinitesimal generator and the supremum is attained by $\vcontrolopt(t,x,i)$.  Define
\begin{displaymath}
 \HJBF (t,x,u,i) := \frac{\partial \valuefn}{\partial t} (t,x,i) + \ffn (t,x,u,i) + \mathcal{A}^{u} \valuefn(t, x, i).
\end{displaymath}
Using It{\^o}'s formula (Theorem \ref{THMitoformula}) to expand $\mathcal{A}^{u} \valuefn(t, x, i)$, we find
\begin{displaymath}
\begin{split}
 \HJBF (t,x,u,i) & = \ffn (t,x,u,i) + \frac{\partial \valuefn}{\partial t} (t,x,i) + \sum_{n=1}^{\BMdim} \frac{\partial \valuefn}{\partial x_{n}} (t,x,i) \meanret_{n} (t,x,i) \\
& + \frac{1}{2} \sum_{n=1}^{\BMdim} \sum_{m=1}^{\BMdim} \frac{\partial^{2} \valuefn}{\partial x_{n} \partial x_{m}} (t,x,i) \sum_{l=1}^{\BMdim} \varianceret_{nl} (t,x,i) \varianceret_{ml} (t,x,i) \\
& + \sum_{j=1}^{\markovdim} \markovgenerator_{ij} \left( \valuefn(t, x, j) - \valuefn(t, x, i) \right).
\end{split}
\end{displaymath}
Differentiate $\HJBF (t,x,\vcontrolopt(t,x,i),i)$ with respect to $x_{\kwrtx}$ and evaluate at $x=\stateopt(t)$ and $i=\markovchain (t)$.  For notational ease denote the quadruple $(t,\stateopt(t),\vcontrolopt(t,\stateopt(t),\markovchain (t)),\markovchain (t))$ by $(t,\markovchain (t))$.  We get
\begin{equation} \label{EQNHJBFoptsub}
\begin{split}
 0 & = \frac{\partial \ffn}{\partial x_{\kwrtx}} (t,\markovchain (t)) + \frac{\partial^{2} \valuefn}{\partial x_{\kwrtx} \partial t} (t,\stateopt(t),\markovchain (t)) + \sum_{n=1}^{\BMdim} \frac{\partial^{2} V}{\partial x_{\kwrtx} \partial x_{n}} (t,\stateopt(t),\markovchain (t)) \cdot \meanret_{n} (t,\markovchain (t)) \\
& + \sum_{n=1}^{\BMdim} \frac{\partial V}{\partial x_{n}} (t,\stateopt(t),\markovchain (t)) \cdot \frac{\partial \meanret_{n}}{\partial x_{\kwrtx}} (t,\markovchain (t)) \\
& + \frac{1}{2} \sum_{n=1}^{\BMdim} \sum_{m=1}^{\BMdim} \frac{\partial^{3} V}{\partial x_{\kwrtx} \partial x_{n} \partial x_{m}} (t,\stateopt(t),\markovchain (t)) \left( \sum_{l=1}^{\BMdim} \varianceret_{nl} \varianceret_{ml} \right) (t,\markovchain (t)) \\
& + \frac{1}{2} \sum_{n=1}^{\BMdim} \sum_{m=1}^{\BMdim} \frac{\partial^{2} V}{\partial x_{n} \partial x_{m}} (t,\stateopt(t),\markovchain (t)) \frac{\partial}{\partial x_{\kwrtx}} \left( \sum_{l=1}^{\BMdim} \varianceret_{nl} \varianceret_{ml} \right) (t,\markovchain (t)) \\
& + \sum_{j=1}^{\markovdim} \markovgenerator_{\markovchain (t), j} \left( \frac{\partial \valuefn}{\partial x_{\kwrtx}} (t,\stateopt(t),j) - \frac{\partial \valuefn}{\partial x_{\kwrtx}} (t,\stateopt(t),\markovchain (t)) \right).
\end{split}
\end{equation}
Next define
\begin{displaymath}
 \fnYk (t) := \frac{\partial \valuefn}{\partial x_{\kwrtx}} (t, \stateopt(t), \markovchain (t)), \quad \textrm{for $\kwrtx=1, \ldots, \BMdim$}.
\end{displaymath}
Using It{\^o}'s formula (Theorem \ref{THMitoformula}) to obtain the dynamics of  $\fnYk (t)$, we find
\begin{displaymath}
\begin{split}
  \ud \fnYk (t) & = \bigg\{ \frac{\partial^{2} \valuefn}{\partial t \partial x_{\kwrtx}} (t, \stateopt(t), \markovchain (t)) + \sum_{n=1}^{\BMdim} \frac{\partial^{2} V}{\partial x_{n} \partial x_{\kwrtx}} (t, \stateopt(t), \markovchain (t)) \cdot \meanret_{n} (t, \markovchain (t)) \\
& + \frac{1}{2} \sum_{n=1}^{\BMdim} \sum_{m=1}^{\BMdim} \frac{\partial^{3} V}{\partial x_{n} \partial x_{m} \partial x_{\kwrtx}} (t,\stateopt(t),\markovchain (t)) \left( \sum_{l=1}^{\BMdim} \varianceret_{nl} \varianceret_{ml} \right) (t,\markovchain (t)) \\
& + \sum_{j=1}^{\markovdim} \markovgenerator_{\markovchain (t),j} \left( \frac{\partial \valuefn}{\partial x_{\kwrtx}} (t,\stateopt(t),j) - \frac{\partial \valuefn}{\partial x_{\kwrtx}} (t,\stateopt(t),\markovchain (t)) \right) \bigg\} \ud t \\
& + \sum_{n=1}^{\BMdim} \frac{\partial^{2} V}{\partial x_{n} \partial x_{\kwrtx}} (t, \stateopt(t), \markovchain (t)) \sum_{m=1}^{\BMdim} \varianceret_{nm} (t,\markovchain (t)) \ud \BM_{m} (t) \\
& + \sum_{j \neq i} \left( \frac{\partial \valuefn}{\partial x_{\kwrtx}} (t,\stateopt(t),j) - \frac{\partial \valuefn}{\partial x_{\kwrtx}} (t,\stateopt(t),i) \right) \ud \markovmartingale_{ij} (t).
\end{split}
\end{displaymath}
Substituting for $\frac{\partial^{2} \valuefn}{\partial t \partial x_{\kwrtx}}$ from (\ref{EQNHJBFoptsub}), we get
\begin{equation} \label{EQNdpfnYkfinal}
\begin{split}
  \ud \fnYk (t) = & - \bigg\{ \frac{\partial \ffn}{\partial x_{\kwrtx}} (t,\markovchain (t)) + \sum_{n=1}^{\BMdim} \frac{\partial V}{\partial x_{n}} (t,\stateopt(t),\markovchain (t)) \cdot \frac{\partial \meanret_{n}}{\partial x_{\kwrtx}} (t,\markovchain (t)) \\
+ & \frac{1}{2} \sum_{n=1}^{\BMdim} \sum_{m=1}^{\BMdim} \frac{\partial^{2} V}{\partial x_{n} \partial x_{m}} (t,\stateopt(t),\markovchain (t)) \frac{\partial}{\partial x_{\kwrtx}} \left( \sum_{l=1}^{\BMdim} \varianceret_{nl} \varianceret_{ml} \right) (t,\markovchain (t)) \bigg\} \ud t \\
+ & \sum_{n=1}^{\BMdim} \frac{\partial^{2} V}{\partial x_{n} \partial x_{\kwrtx}} (t, \stateopt(t), \markovchain (t)) \sum_{m=1}^{\BMdim} \varianceret_{nm} (t,\markovchain (t)) \ud \BM_{m} (t) \\
+ & \sum_{j \neq i} \left( \frac{\partial \valuefn}{\partial x_{\kwrtx}} (t,\stateopt(t),j) - \frac{\partial \valuefn}{\partial x_{\kwrtx}} (t,\stateopt(t),i) \right) \ud \markovmartingale_{ij} (t).
\end{split}
\end{equation}
Note that
\begin{equation} \label{EQNdpfnrearrangesigma}
\begin{split}
 \frac{1}{2} \sum_{n=1}^{\BMdim} \sum_{m=1}^{\BMdim} \frac{\partial^{2} V}{\partial x_{n} \partial x_{m}} \frac{\partial}{\partial x_{\kwrtx}} \left( \sum_{l=1}^{\BMdim} \varianceret_{nl} \varianceret_{ml} \right) = & \frac{1}{2} \sum_{n=1}^{\BMdim} \sum_{m=1}^{\BMdim} \frac{\partial^{2} V}{\partial x_{n} \partial x_{m}} \sum_{l=1}^{\BMdim} \left( \frac{\partial \varianceret_{nl} }{\partial x_{\kwrtx}}  \varianceret_{ml} + \varianceret_{nl} \frac{\partial \varianceret_{ml} }{\partial x_{\kwrtx}} \right) \\
= & \sum_{m=1}^{\BMdim} \sum_{l=1}^{\BMdim} \left( \sum_{n=1}^{\BMdim} \varianceret_{nl} \frac{\partial^{2} V}{\partial x_{n} \partial x_{m}} \right) \frac{\partial \varianceret_{ml} }{\partial x_{\kwrtx}}.
\end{split}
\end{equation}
Next, from (\ref{EQNhamiltonian}) we find that
\begin{displaymath}
\begin{split}
\frac{\partial \hamiltonian}{\partial x_{\kwrtx}} (t,\state(t),\vcontrol(t),\markovchain(t),\vhamp(t),\vhamq(t)) = & \frac{\partial \ffn}{\partial x_{\kwrtx}} (t,\markovchain (t)) + \sum_{n=1}^{\BMdim} \frac{\partial \meanret_{n}}{\partial x_{\kwrtx}} (t,\markovchain (t)) \hamp_{n}(t) \\
+ & \sum_{n=1}^{\BMdim} \sum_{m=1}^{\BMdim} \frac{\partial \varianceret_{nm}}{\partial x_{\kwrtx}} (t,\markovchain (t)) \hamq_{nm} (t).
\end{split}
\end{displaymath}
Substituting (\ref{EQNdphamp}) - (\ref{EQNdphamr}), (\ref{EQNdpfnrearrangesigma}) and the last equation into (\ref{EQNdpfnYkfinal}) gives
\begin{displaymath}
  \ud \fnYk (t) = - \frac{\partial \hamiltonian}{\partial x_{\kwrtx}} (t,\state(t),\vcontrol(t),\markovchain(t),\vhamp(t),\vhamq(t)) \ud t + \sum_{m=1}^{\BMdim} \hamq_{km} (t) \ud \BM_{m} (t) + \sum_{j \neq i} \hamr_{ij}^{(k)} (t) \ud \markovmartingale_{ij} (t),
\end{displaymath}
and as $\fnYk (t)=\hamp_{k}(t)$ for each $k=1, \ldots, \BMdim$, we have shown that $\vhamp(t)$, $\vhamq(t)$ and $\hamr (t)$ given by (\ref{EQNdphamp})-(\ref{EQNdphamr}) solve the adjoint equation (\ref{EQNadjoint}).
\end{proof}

\section{Application: \QLM problem} \label{SECexample}

We demonstrate the use of the maximum principle by solving a \QLM problem.  Consider a regime-switching financial market that is built upon one traded asset, which we call the risky asset, and a risk-free asset.  The risk-free asset's price process $\bankaccount = \{ \bankaccount (t), t \in \timeset \}$ is given by
\begin{equation} \label{MKTriskfreeprice}
 \frac{\ud \bankaccount (t)}{\bankaccount (t) } = r (t, \markovchain (t_{-})) \ud t, \quad \forall t \in \timeset, \quad \bankaccount (0) = 1,
\end{equation}
where the risk-free rate of return $r(t,i)$ is a bounded, deterministic function on $\timeset$ for $i=1, \ldots, \markovdim$.

The price process $\stockprice = \{ \stockprice (t), t \in \timeset \}$ of the risky asset is given by 
\begin{equation} \label{MKTriskyprice}
 \frac{\ud \stockprice (t)}{\stockprice (t) } = b (t, \markovchain (t_{-})) \ud t + \sigma (t, \markovchain (t_{-})) \ud \BM (t), \quad \forall t \in \timeset,
\end{equation}
with the initial value $\stockprice (0)$ being a fixed, strictly positive constant in $\realnumbers$.  We assume that the mean rate of return $b (t,i)$ and the volatility process $\sigma (t,i)$ are bounded, non-zero, deterministic functions on $\timeset$ for $i=1, \ldots, \markovdim$.  Here, $\BM$ is a 1-dimensional standard Brownian motion and $b$ and $\sigma$ are scalar processes.

A portfolio process $\pi (t)$ is a $\{ \filtration_{t} \}$-previsible scalar process which gives the amount invested in the risky asset at time $t$.  Denote by $\pi_{0} (t)$ the amount invested in the risk-free asset at time $t$.  The corresponding wealth process $X^{\pi} (t)$ is then given by
\begin{displaymath}
 X^{\pi} (t) = \pi_{0} (t) +  \pi (t).
\end{displaymath}
We assume that at time 0, $X^{\pi} (0) = x_{0}$, a.s.  Define the market price of diffusion risk $\theta (t,i) := \sigma^{-1} (t,i) ( b (t,i) - r(t,i))$.  Under the self-financing condition, the dynamics of the wealth process satisfy
\begin{equation}  \label{EQNstateproc}
\ud X^{\pi} (t) = \left( r(t) X^{\pi} (t) + \pi (t) \sigma (t) \theta(t) \right) \ud t + \pi (t) \sigma (t) \, \ud \BM(t), \quad X^{\pi} (0) = \initialstate.
\end{equation}
We say that $\pi (t)$ is an admissible portfolio process and write $\pi \in \admissibleportfolios$, if it is a $\{ \filtration_{t} \}$-previsible, square-integrable, scalar process.

We consider the problem of finding an admissible portfolio process $\bar{\pi} \in \admissibleportfolios$ such that
\begin{displaymath}
 \expectation \left( X^{\bar{\pi}} (\timehorizon) - \constc \right)^{2} = \inf_{\pi \in \admissibleportfolios} \expectation \left( X^{\pi} (\timehorizon) - \constc \right)^{2},
\end{displaymath}
for some fixed constant $\constc \in \realnumbers$.

To solve this, we use the sufficient maximum principle of Theorem \ref{THMstocmax}.  Define the real-valued function $\gfn (x):=- (x - \constc)^{2}$ and consider the equivalent problem of maximizing
\begin{equation} \label{EQNgtomin}
 \expectation \left( \gfn (X^{\pi} (\timehorizon)) \right) = \expectation \left( - \left( X^{\pi} (\timehorizon) - \constc \right)^{2} \right).
\end{equation}
over all $\pi \in \admissibleportfolios$.  Set the control process $\control (t) := \pi (t)$ and $X (t) := X^{\pi} (t)$.  For this example, the Hamiltonian (\ref{EQNhamiltonian}) becomes
\begin{equation} \label{EQNhamiltoniancase}
\hamiltonian (t,x,\control,i,\hamp,\hamq) := \left( r (t,i) x + \control \sigma (t,i) \theta (t,i) \right) \hamp + \control \sigma(t,i) \hamq,
\end{equation}
and the adjoint equations (\ref{EQNadjoint}) are for all $t \in [0,\timehorizon)$,
\begin{equation} \label{EQNadjointeqncase}
\left\{ \begin{array}{ll}
 \ud \hamp(t) & = - r(t) \hamp (t) \ud t + \hamq(t) \ud \BM (t) + \sum_{j \neq i} \hamr_{ij} (t) \ud \markovmartingale_{ij} (t), \\
\hamp(\timehorizon) & = - 2 X (\timehorizon) + 2 \constc, \quad \textrm{a.s.}
\end{array} \right.
\end{equation}
We seek the solution $(\hamp (t), \hamq(t), \hamr(t))$ to (\ref{EQNadjointeqncase}).  Since $\gfn(x)$ is quadratic in $x$ and the adjoint process $\hamp$ is the first derivative of the function $\gfn$, a natural assumption is that $\hamp$ is linear in $X$.  This means that $\hamp$ is of the form
\begin{equation} \label{EQNadjointguess}
 \hamp (t) = \adjpphi (t, \markovchain(t)) X (t) + \adjppsi (t, \markovchain(t)),
\end{equation}
where $\adjpphi(\cdot,i)$ and $\adjppsi(\cdot,i)$ are deterministic, differentiable functions for each $i=1,\ldots,\markovdim$, which are to be found.  From (\ref{EQNadjointeqncase}), $\adjpphi$ and $\adjppsi$ have terminal boundary conditions
\begin{equation} \label{EQNtermBCs}
 \adjpphi(\timehorizon,i) = -2 \quad \textrm{and} \quad \adjppsi(\timehorizon,i) = 2 \constc, \quad \forall i \in \markovspace.
\end{equation}
The next step is to expand the right-hand side of (\ref{EQNadjointguess}) and then compare it with (\ref{EQNadjointeqncase}).  To do this, we begin by noting from It\^{o}'s formula (Theorem \ref{THMitoformula}) that for a function $f(t,\markovchain(t))$ we have
\begin{equation} \label{EQNitomarkovexp}
\begin{split}
  \ud f(t,\markovchain(t)) & = f_{t} (t,\markovchain(t_{-})) \ud t + \sum_{j \neq i} \markovgenerator_{ij} \left( f(t,j) - f(t,i) \right) \indicatorfn [ \markovchain (t_{-}) = i ]\ud t \\
& + \sum_{j \neq i} \left( f(t,j) - f(t,i) \right) \ud \markovmartingale_{ij} (t).
\end{split}
\end{equation}
Using (\ref{EQNitomarkovexp}) to expand the functions $\adjpphi$ and $\adjppsi$, and (\ref{EQNstateproc}) to expand $X$ (with $\pi (t):= \control (t)$ and $X^{\pi} (t) := X(t)$), we apply integration-by-parts to (\ref{EQNadjointguess}) to get
\begin{displaymath}
\begin{split}
 \ud \vhamp (t) = \sum_{i=1}^{\markovdim} & \indicatorfn [ \markovchain (t_{-}) = i ] \\
& \bigg\{ X (t_{-}) \left( \adjpphi(t,i) r(t,i) + \adjpphi_{t} (t,i) + \sum_{j=1}^{\markovdim} \markovgenerator_{ij} \left( \adjpphi(t,j) - \adjpphi(t,i) \right) \right) \\
& + \adjpphi(t,i) \control (t) \sigma (t,i) \theta (t,i) + \adjppsi_{t}(t,i) + \sum_{j=1}^{\markovdim} \markovgenerator_{ij} \left( \adjppsi(t,j) - \adjppsi(t,i) \right) \bigg\} \ud t \\
 & + \adjpphi(t) \control (t) \sigma (t) \ud \BM (t) \\
& + \sum_{j \neq i} \bigg( X (t_{-}) \left( \adjpphi(t,j) - \adjpphi(t,i) \right) + \left( \adjppsi(t,j) - \adjppsi(t,i) \right) \bigg) \ud \markovmartingale_{ij} (t) 
\end{split}
\end{displaymath}
Comparing coefficients with (\ref{EQNadjointeqncase}), we obtain three equations
\begin{equation} \label{EQNdtcoeffs}
\begin{split}
& - r(t,\markovchain(t_{-})) \vhamp (t_{-}) \\
& = \sum_{i=1}^{\markovdim} \indicatorfn [ \markovchain (t_{-}) = i ] \bigg\{ X (t_{-}) \left( \adjpphi(t,i) r(t,i) + \adjpphi_{t} (t,i) + \sum_{j=1}^{\markovdim} \markovgenerator_{ij} \left( \adjpphi(t,j) - \adjpphi(t,i) \right) \right) \\
& \qquad + \adjpphi(t,i) \control (t) \sigma (t,i) \theta (t,i) + \adjppsi_{t}(t,i) + \sum_{j=1}^{\markovdim} \markovgenerator_{ij} \left( \adjppsi(t,j) - \adjppsi(t,i) \right) \bigg\},
\end{split}
\end{equation}
\begin{equation} \label{EQNdWcoeffs}
\vhamq(t) = \adjpphi(t) \sigma (t) \control (t),
\end{equation}
\begin{equation} \label{EQNdMcoeffs}
\hamr_{ij} (t) = X (t_{-}) \left( \adjpphi(t,j) - \adjpphi(t,i) \right) + \left( \adjppsi(t,j) - \adjppsi(t,i) \right).
\end{equation}
Let $\hat{\control} \in \admissibleportfolios$ be a candidate for the optimal control with corresponding state process $\hat{X}$ and adjoint solution $(\hamhatp, \hamhatq, \hamhatr)$.  Then for the Hamiltonian (\ref{EQNhamiltoniancase}), for all $\control \in \realnumbers$,
\begin{displaymath}
\hamiltonian (t,\hat{X}(t),\control,\alpha(t),\hamhatp(t),\hamhatq(t)) = \left( r (t) \hat{X}(t) + \control \sigma (t) \theta (t) \right) \hamhatp(t) + \control \sigma(t) \hamhatq(t).
\end{displaymath}
As this is a linear function of $\control$, we guess that the coefficient of $\control$ vanishes at optimality, which results in the equality
\begin{equation} \label{EQNhamhatqsoln}
\hamhatq(t) = - \theta (t) \hamhatp (t).
\end{equation}
Substituting into (\ref{EQNdWcoeffs}) for $\hamhatq (t)$ from (\ref{EQNhamhatqsoln}) and using (\ref{EQNadjointguess}) to replace $\hamhatp (t)$, we get
\begin{equation} \label{EQNcontrolsol}
 \hat{\control} (t) = - \sigma^{-1} (t) \theta (t) \left( \hat{X}(t) + \adjpphi^{-1} (t) \adjppsi(t) \right)
\end{equation}
 Therefore, to find the optimal control it remains to find $\adjpphi$  and $\adjppsi$.  To do this, we set $X(t) := \hat{X}(t)$, $\control(t) := \hat{\control} (t)$ and $\vhamp (t) := \hamhatp (t)$ in (\ref{EQNdtcoeffs}) and then substitute for $\hamhatp (t)$ from (\ref{EQNadjointguess}) and for $\hat{\control} (t)$ from (\ref{EQNcontrolsol}).  This results in a linear equation in $\hat{X}(t)$.  Assuming that the coefficient of $\hat{X}(t)$ equals zero, we obtain two equations
\begin{equation} \label{EQNeqnsolnadjphi}
 \adjpphi(t,i) \left( 2 r(t,i) - \lvert \theta (t,i) \rvert^{2} \right) + \adjpphi_{t}(t,i) + \sum_{j=1}^{\markovdim} \markovgenerator_{ij} \left( \adjpphi(t,j) - \adjpphi(t,i) \right) = 0,
\end{equation}
\begin{equation} \label{EQNeqnsolnadjppsi}
 \adjppsi(t,i) \left( r(t,i) - \lvert \theta (t,i) \rvert^{2} \right) + \adjppsi_{t}(t,i) + \sum_{j=1}^{\markovdim} \markovgenerator_{ij} \left( \adjppsi(t,j) - \adjppsi(t,i) \right) = 0,
\end{equation}
with terminal boundary conditions given by (\ref{EQNtermBCs}).  Consider the processes
\begin{equation} \label{EQNeqnsolnadj}
  \tilde{\adjpphi}(t, \markovchain (t)) := -2 \, \expectation \left( \exp \bigg\{ \int_{t}^{\timehorizon} \left( 2 r (s) - \lvert \theta (s) \rvert^{2} \right) \ud s \bigg\} \,\bigg\vert \, \markovchain(t) \right)
\end{equation}
and 
\begin{equation} \label{EQNeqnsolnadjii}
\tilde{\adjppsi} (t, \markovchain (t)) := 2 \constc \, \expectation \left(  \exp \bigg\{ \int_{t}^{\timehorizon} \left( r (s) - \lvert \theta (s) \rvert^{2} \right) \ud s \bigg\} \, \bigg\vert \, \markovchain(t) \right).
\end{equation}
We aim to show that $\adjpphi = \tilde{\adjpphi}$ and $\adjppsi = \tilde{\adjppsi}$.  It is helpful to define at this point the following martingales:
\begin{equation} \label{EQNCdiscountPsi}
 \fnpsi (t) := \expectation \left( \exp \bigg\{ \int_{0}^{\timehorizon} \left( 2 r (s) - \lvert \theta (s) \rvert^{2} \right) \ud s \bigg\} \, \bigg\vert \, \filtration_{t}^{\markovchain} \right)
\end{equation}
and
\begin{equation} \label{EQNCdiscountfnP}
\fnP (t) := \expectation \left( \exp \bigg\{ \int_{0}^{\timehorizon} \left( r (s) - \lvert \theta (s) \rvert^{2} \right) \ud s \bigg\} \bigg\vert \, \filtration_{t}^{\markovchain} \right),
\end{equation}
where $\filtration_{t}^{\markovchain}:= \sigma \{ \markovchain (\tau), \tau \in [0,t] \} \vee \mathcal{N} ( \Pmeasure )$ is the filtration generated by the Markov chain.  From the $\{ \filtration_{t}^{\markovchain}\}$-martingale representation theorem, there exists $\{ \filtration_{t}^{\markovchain} \}$-previsible, square-integrable processes $\integrandpsi (t), \integrandP (t)$ such that
\begin{displaymath}
 \fnpsi (t) = \fnpsi (0) + \sum_{j \neq i} \int_{0}^{t} \integrandpsi_{ij} (\tau) \ud \markovmartingale_{ij} (\tau) \quad \textrm{and} \quad \fnP (t) = \fnP (0) + \sum_{j \neq i} \int_{0}^{t} \integrandP_{ij} (\tau) \ud \markovmartingale_{ij} (\tau).
\end{displaymath}
By the positivity of $\fnpsi (t)$ and $\fnP (t)$, we can define the processes $\integrandpsihat_{ij} (t) := \integrandpsi_{ij} (t) \fnpsi^{-1} (t_{-})$ and $\integrandPhat_{ij} (t) := \integrandP_{ij} (t) \fnP^{-1} (t_{-})$ so that
\begin{equation} \label{EQNMRTCDexp}
 \fnpsi (t) = \fnpsi (0) + \sum_{j \neq i} \int_{0}^{t} \fnpsi (\tau_{-}) \integrandpsihat_{ij} (\tau) \ud \markovmartingale_{ij} (\tau) \quad \textrm{and} \quad \fnP (t) = \fnP (0) + \sum_{j \neq i} \int_{0}^{t} \fnP (\tau_{-}) \integrandPhat_{ij} (\tau) \ud \markovmartingale_{ij} (\tau).
\end{equation}

From (\ref{EQNeqnsolnadj}) and the definition of $\fnpsi$ in (\ref{EQNCdiscountPsi}), we have the relationship
\begin{equation} \label{EQNfnpsiaso}
 \fnpsi (t) = - \frac{1}{2} \tilde{\adjpphi} (t, \markovchain (t)) \exp \bigg\{ \int_{0}^{t} \left( 2 r (s) - \lvert \theta (s) \rvert^{2} \right) \ud s \bigg\}, \qquad \forall t \in \timeset.
\end{equation}
Using the It\^{o} formula expansion of $\tilde{\adjpphi}(t, \markovchain (t))$ (see (\ref{EQNitomarkovexp})), we apply integration-by-parts to expand the right-hand side of the above equation and comparing it with the martingale representation of $\fnpsi (t)$ given by (\ref{EQNMRTCDexp}), we find that $\tilde{\adjpphi}$ satisfies (\ref{EQNeqnsolnadjphi}) with $\adjpphi := \tilde{\adjpphi}$.  We conclude that $\adjpphi = \tilde{\adjpphi}$.

Similarly, from (\ref{EQNeqnsolnadjii}) and the definition of $\fnP$ in (\ref{EQNCdiscountfnP}), we have
\begin{equation} \label{EQNfnpsiasi}
 \fnP (t) = \frac{1}{2 \constc} \tilde{\adjppsi} (t, \markovchain (t)) \exp \bigg\{ \int_{0}^{t} \left( r (s) - \lvert \theta (s) \rvert^{2} \right) \ud s \bigg\}, \qquad \forall t \in \timeset.
\end{equation}
Using the It\^{o} formula expansion of $\tilde{\adjppsi} (t, \markovchain (t))$ (see (\ref{EQNitomarkovexp})), we apply integration-by-parts to expand the right-hand side of the above equation and comparing it with $\fnP (t)$ given by (\ref{EQNMRTCDexp}), we find that $\tilde{\adjppsi}$ satisfies (\ref{EQNeqnsolnadjppsi}) with $\adjppsi := \tilde{\adjppsi}$.  We conclude that $\adjppsi = \tilde{\adjppsi}$.  Thus from (\ref{EQNadjointguess}), (\ref{EQNdWcoeffs}) and (\ref{EQNdMcoeffs}), we can write down the solutions
\begin{displaymath}
 \hamhatp (t) = \adjpphi (t) \hat{X} (t) + \adjppsi (t), \quad \hamhatq(t) = \adjpphi(t) \sigma (t) \hat{\control} (t), \quad \hamhatr_{ij} (t) = \hat{X} (t_{-}) \left( \adjpphi(t,j) - \adjpphi(t,i) \right) + \left( \adjppsi(t,j) - \adjppsi(t,i) \right).
\end{displaymath}
to the adjoint equation (\ref{EQNadjointeqncase}).  Substitute into (\ref{EQNcontrolsol}) for $\adjpphi = \tilde{\adjpphi}$ from (\ref{EQNfnpsiaso}) and for $\adjppsi = \tilde{\adjppsi}$ from (\ref{EQNfnpsiasi}) and use the Markov property of $\markovchain$ to obtain the control process
\begin{equation} \label{EQNoptimalcontrol}
 \hat{\control} (t) = - \left( \hat{X} (t) - \constc \frac{\expectation \left( \exp \bigg\{ \int_{t}^{\timehorizon} \left( r (s) - \lvert \theta (s) \rvert^{2} \right) \ud s \bigg\} \, \bigg\vert \, \markovchain(t) \right)}{\expectation \left( \exp \bigg\{ \int_{t}^{\timehorizon} \left( 2 r (s) - \lvert \theta (s) \rvert^{2} \right) \ud s \bigg\} \, \bigg\vert \, \markovchain(t) \right)} \right) \sigma^{-1} (t) \theta (t).
\end{equation}
With this choice of control process and the boundedness conditions on the market parameters $r$, $b$ and $\sigma$, the conditions of Theorem \ref{THMstocmax} are satisfied and hence $\hat{\control} (t)$ is the optimal control process.

\begin{rmk}
 The above result can be used to obtain the solution to the classical problem of mean-variance portfolio optimization.  Suppose we wish to find an admissible portfolio process which minimizes var$(X(\timehorizon)) = \expectation \left( X(\timehorizon) - \expectation ( X(\timehorizon) ) \right)^{2}$ subject to $\expectation ( X(\timehorizon) ) = a$, for some $a \in \realnumbers$.  Applying a Lagrange multiplier technique, we note that for all $\lambda \in \realnumbers$,
\begin{displaymath}
 \expectation \left( \left( X(\timehorizon) - a \right)^{2} + 2 \lambda ( X(\timehorizon) - a) \right) = \expectation \left( X(\timehorizon) - a + \lambda \right)^{2} - \lambda^{2}.
\end{displaymath}
Fix $\lambda \in \realnumbers$ and minimize $\expectation \left( X(\timehorizon) - a + \lambda \right)^{2}$.  The portfolio process which minimizes this is $\hat{\control} (t):= \hat{\control} (t;\lambda)$, which is given by (\ref{EQNoptimalcontrol}) with $\constc := a - \lambda$.  Then we maximize the quadratic function $\expectation \left( X(\timehorizon) - a + \lambda \right)^{2} - \lambda^{2}$ over all $\lambda \in \realnumbers$ to find the optimal $\lambda^{\star} \in \realnumbers$ and hence we obtain the optimal portfolio process $\hat{\control} (t;\lambda^{\star})$ which solves the mean-variance problem.
\end{rmk}

\begin{rmk}
 The optimal control process for the mean-variance problem was also found in \citet{zhouyin03.article} using a stochastic LQ control technique  and completion-of-squares.
\end{rmk}

\section*{Acknowledgments}
This work was carried out while the author was at ETH Zurich, Switzerland.  The author thanks RiskLab, ETH Zurich, Switzerland for financial support and an anonymous referee for constructive comments which improved the paper.

\bibliographystyle{plainnat}

\bibliography{article}

\begin{thebibliography}{6}
\providecommand{\natexlab}[1]{#1}
\providecommand{\url}[1]{\texttt{#1}}
\expandafter\ifx\csname urlstyle\endcsname\relax
  \providecommand{\doi}[1]{doi: #1}\else
  \providecommand{\doi}{doi: \begingroup \urlstyle{rm}\Url}\fi

\bibitem[Bismut(1973)]{bismut73.article}
J.-M. Bismut.
\newblock Conjugate convex functions in optimal stochastic control.
\newblock \emph{Journal of Mathematical Analysis and Applications}, 44\penalty0
  (2):\penalty0 384--404, November 1973.

\bibitem[Elliott et~al.(2010)Elliott, Siu, and
  Badescu]{elliottsiubadescu10.article}
R.J. Elliott, T.~K. Siu, and A.~Badescu.
\newblock On mean-variance portfolio selection under a hidden {Markovian}
  regime-switching model.
\newblock \emph{Economic modelling}, 27:\penalty0 678--686, 2010.

\bibitem[Framstad et~al.(2004)Framstad, {\O}ksendal, and
  Sulem]{framstadetal04.article}
N.~C. Framstad, B.~{\O}ksendal, and A.~Sulem.
\newblock Sufficient stochastic maximum principle for the optimal control of
  jump diffusions and applications to finance.
\newblock \emph{Journal of Optimization Theory and Applications}, 121\penalty0
  (1):\penalty0 77--98, 2004.

\bibitem[Protter(2005)]{protter.book}
P.~E. Protter.
\newblock \emph{Stochastic Integration and Differential Equations}.
\newblock Springer-Verlag, New York, USA, 2nd edition, 2005.

\bibitem[Rogers and Williams(2000)]{rogerswilliamsii.book}
L.C.G. Rogers and D.~Williams.
\newblock \emph{Diffusions, Markov Processes and Martingales Volume 2 It{\^o}
  Calculus}.
\newblock Cambridge University Press, Cambridge, UK, 2000.

\bibitem[Zhou and Yin(2003)]{zhouyin03.article}
X.~Y. Zhou and G.~Yin.
\newblock Markowitz's mean-variance portfolio selection with regime switching:
  A continuous-time model.
\newblock \emph{SIAM Journal on Control and Optimization}, 42\penalty0
  (4):\penalty0 1466--1482, 2003.

\end{thebibliography}

\end{document}